\newtheorem{theorem}{Theorem}
\newtheorem{corollary}{Corollary}
\newtheorem{lemma}{Lemma}
\newtheorem{definition}{Definition}
\newtheorem{remark}{Remark}
\newtheorem{example}{Example}
\newcommand{\addresseshere}{
  \enddoc@text\let\enddoc@text\relax
}
\newcommand*{\y}{\mathbf{y}}
\newcommand*{\x}{\mathbf{x}}
\newcommand*{\PS}{\mathsf{PS}}
\newcommand*{\PF}{\mathsf{PF}}
\newcommand*{\PA}{\mathsf{PA}}
\newcommand*{\Pref}[1]{\mathsf{Pref}_{#1}}
\newcommand*{\N}{\mathbb{N}}
\newcommand*{\Out}{\mathcal{O}}
\newcommand{\Sym}{\mathfrak{S}}
\title{
Counting Parking Sequences and Parking Assortments Through Permutations
}
\author[Franks]{Spencer J. Franks}
\author[Harris]{Pamela E. Harris}
\thanks{P.~E.~Harris was supported through a Karen Uhlenbeck EDGE Fellowship.
}
\author[Harry]{Kimberly Harry}
\author[Kretschmann]{Jan Kretschmann}
\author[Vance]{Megan Vance}
\address[Franks, Harris, Harry, Kretschmann, Vance]{Department of Mathematical Sciences, University of Wisconsin-Milwaukee, Milwaukee, WI 53211}
\email{\textcolor{blue}{\href{mailto:sjfranks@uwm.edu}{sjfranks@uwm.edu}}, \textcolor{blue}{\href{mailto:peharris@uwm.edu}{peharris@uwm.edu}}, \textcolor{blue}{\href{mailto:kjharry@uwm.edu}{kjharry@uwm.edu}}, \textcolor{blue}{\href{mailto:kretsc23@uwm.edu}{kretsc23@uwm.edu}}, \textcolor{blue}{\href{mailto:mmvance@uwm.edu}{mmvance@uwm.edu}}}
\begin{document}

\begin{abstract}
Parking sequences (a generalization of parking functions) are defined by specifying car lengths and requiring that a car attempts to park in the first available spot after its preference. If it does not fit there, then a collision occurs and the car fails to park. In contrast, parking assortments generalize parking sequences (and parking functions) by allowing cars (also of assorted lengths) to seek forward from their preference to identify a set of contiguous unoccupied spots in which they fit. 
We consider both parking sequences and parking assortments and establish that the
number of preferences resulting in a fixed parking order $\sigma$ is related to the lengths of cars indexed by certain subsequences in $\sigma$.
The sum of these numbers over all parking orders (i.e.~permutations of $[n]$) yields new formulas for the total number of parking sequences and of parking assortments. 
\end{abstract}

\maketitle

\section{Introduction}
Throughout, we let 
$n\in\N=\{1,2,3,\ldots\}$ and $[n]=\{1,2,3,\ldots,n\}$. 
\emph{Parking sequences}, as introduced by Ehrenborg and Happ~\cite{ehrenborg2016parking}, are defined as follows. 
Suppose there are $n$ cars $1, 2,\ldots, n$ of lengths $y_1,y_2, \ldots , y_n\in\N$, respectively. 
Let $m = \sum_{i=1}^{n}y_i$ be the number of parking spots on a one-way street.
Sequentially label parking spots $1,2,3,\ldots,m$ increasingly along the direction of a one-way street. 
We let $x_i\in [m]$ denote the preferred spot of car $i$, for all $i\in[n]$, and we say  $\x=(x_1, x_2, \ldots, x_n)$ is the \emph{preference list} for the cars with lengths $\y=(y_1,y_2,\ldots,y_n)$.
The cars enter the one-way street from the left in the order $1, 2,\ldots,n$; and car $i$ seeks the first empty spot $j\geq x_i$.
If all of the spots $j,j+1,\ldots,j+y_i-1$ are empty, then car $i$ parks there. 
If spot $j$ is empty and at least one of the spots $j+1,j+2,\ldots,j+y_i-1$ is occupied, then there is a collision; and car $i$ fails to park.
If all cars park successfully under the preference list $\x$, then $\x$ is a \emph{parking sequence} for $\y$.
We denote the set of parking sequences for $\y$ by $\PS_n(\y)$.
Figure \ref{fig:ps example} illustrates examples $\x$ which are and are not parking sequences for $\y=(1,2,1)$.

\begin{figure}[!h]
    \centering
    \begin{tikzpicture}
\node at (2,-1) {$\x=(3,1,4)$};
    \draw[step=1cm,gray,very thin] (0,0) grid (4,1);
    \draw[fill=gray!50] (2.1,0.1) rectangle (2.9,.9);
    \node at (2.5,.5) {$1$};
    \draw[fill=gray!50] (.1,0.1) rectangle (1.9,.9);
    \node at (1,.5) {$2$};
    \draw[fill=gray!50] (3.1,0.1) rectangle (3.9,.9);
    \node at (3.5,.5) {$3$};
    \node at (.5,-.25) {$1$};
    \node at (1.5,-.25) {$2$};
    \node at (2.5,-.25) {$3$};
    \node at (3.5,-.25) {$4$};
    \end{tikzpicture}
    \qquad\qquad\qquad
    \begin{tikzpicture}
    \node at (2,-1) {$\x=(2,1,1)$};
    \draw[step=1cm,gray,very thin] (0,0) grid (4,1);
     \draw[fill=gray!50] (1.1,0.1) rectangle (1.9,.9);
    \node at (1.5,.5) {$1$};
    \draw[fill=gray!50] (-2,1.1) rectangle (-.1,1.9);
    \node at (-1,1.5) {$2$};
    \draw[->,red, thick, out=0,in=90](0,1.5) to (.5,.5);
    \draw[->,red, thick, out=90,in=180](.5,.5) to (2.5,1.5);
    \node at (.5,-.25) {$1$};
     \node at (1.5,-.25) {$2$};
     \node at (2.5,-.25) {$3$};
     \node at (3.5,-.25) {$4$};
    \end{tikzpicture}
    \caption{Note $\x=(3,1,4)$ is a parking sequence for  $\y=(1,2,1)$ in which 
car $1$ of length 1 parks in spot 3, 
car $2$ of length 2 parks in spots 1 and 2, and 
car $3$ of length 1 parks in spot 4. 
On the other hand, $\x=(2,1,1)$ is not a parking sequence for $\y$, since car $2$ collides with car $1$ when attempting to park.
}
    \label{fig:ps example}
\end{figure}
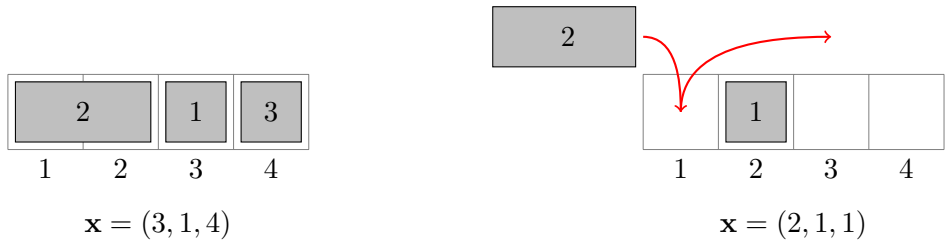

We remark that the set of parking sequences for $n$ cars each with unit length (the case where $\y=(1,1,\ldots,1)\in\N^n$) is precisely the set of (classical) parking functions, which we denote by $\PF_n$. 
Thus, parking sequences are a generalization of parking functions. 
Ehrenborg and Happ established the following 
\cite[Theorem 1.3]{ehrenborg2016parking}: The number of parking sequences for cars with lengths $\y=(y_1,y_2,\ldots,y_n)$ is given by the product
\begin{align}
    |\PS_n(\y)|&=(y_1+n)(y_1+y_2+n-1)\cdots(y_1+\cdots+y_{n-1}+2).\label{ps count}
\end{align} 
Ehrenborg and Happ's proof of \eqref{ps count}  constructed a ``circular street'' on which the cars park, an argument used by Pollack (see, \cite{ballotsandtrees}) to establish that $|\PF_n|=(n+1)^{n-1}$.

Given a parking sequence $\x$ for $\y$, the result of the parking experiment yields a permutation $\sigma=\sigma_1\sigma_2\cdots\sigma_n$ of $[n]$, written in one-line notation, which denotes the order in which the cars park on the street. 
Note that $\sigma$ corresponds to the order in which the cars park, not the order in which they arrive. 
Namely, for each $j\in[n]$, $\sigma_j=i$ denotes that car $i$ is the $j$th car parked on the street. 
In this work, we are interested in determining an alternative way of counting the number of parking sequences for $\y$, by keeping track of those that park the cars in the order $\sigma$. 
To this effect, we let $\Sym_n$ denote the set of permutations on $[n]$ and for a fixed $\y$ we define the outcome map $\Out_{\PS_n(\y)}: \PS_n(\y) \rightarrow \mathfrak{S}_n$ by $\Out_{\PS_n(\y)}(\x)=\sigma=\sigma_1\sigma_2\cdots\sigma_n$
 and, given $\sigma\in\mathfrak{S}_n$, we study the fibers of the outcome map:
\[\Out^{-1}_{\PS_n(\y)}(\sigma)=
\{\x\in\PS_n(\y): \Out_{\PS_n(\y)}(\x)=\sigma\}.
\]
Our first main result, proved in Section \ref{sec:parking sequneces}, establishes the following.

\begin{restatable}{thm}{restatethm}
\label{theorem:parking sequences}
    Fix $\y=(y_1,y_2,\ldots,y_n)\in\N^n$ and  $\sigma=\sigma_1\sigma_2\cdots\sigma_n\in\mathfrak{S}_n$. Then 
    \[|\Out_{\PS_n(\y)}^{-1}(\sigma)|=\prod_{i=1}^n\left(1+\sum_{k\in L(\y,\sigma_i)}y_k\right),\]
    where 
$L(\y,\sigma_i)=\emptyset$ if $i=1$ or if $\sigma_{i-1}>\sigma_i$, otherwise 
$L(\y, \sigma_i)=\{\sigma_t,\sigma_{t+1},\ldots,\sigma_{i-1}\}$ with $\sigma_t\sigma_{t+1}\ldots \sigma_i$ being the longest subsequence of $\sigma$ such that  $\sigma_k< \sigma_i$ for all $t\le k< i$.
\end{restatable}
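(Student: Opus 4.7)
The plan is to exploit the fact that once the parking order $\sigma$ is fixed, the spatial arrangement of the cars is rigid. Since $\sum_i y_i$ equals the total number of spots $m$, any parking sequence fills the street with no gaps. In particular, if $\Out_{\PS_n(\y)}(\x)=\sigma$ then car $\sigma_i$ necessarily occupies the block of spots from $P(i)$ to $P(i)+y_{\sigma_i}-1$, where $P(i)=1+\sum_{j=1}^{i-1}y_{\sigma_j}$. Counting $\Out_{\PS_n(\y)}^{-1}(\sigma)$ therefore reduces to counting preference lists $\x$ under which, for each car $c$, the forward-seeking rule delivers $c$ to its prescribed block when $c$ arrives and faces the configuration produced by cars $1,2,\ldots,c-1$.

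Next I would observe that the choices of $x_c$ decouple across $c$. The snapshot of the street when car $c$ arrives is determined entirely by $\sigma$ (it consists of cars $1,\ldots,c-1$ at their prescribed spots), so the constraints on $x_c$ do not depend on the other preferences. Writing $c=\sigma_i$, for $c$ to park at $P(i)$ we need $x_c\le P(i)$ and every spot among $x_c, x_c+1,\ldots,P(i)-1$ to be occupied at that moment; the no-collision condition that the target block $P(i),\ldots,P(i)+y_c-1$ be empty is automatic, since in the final layout those spots belong exclusively to car $c$ and no earlier-arriving car can sit there. Consequently, the number of valid preferences for $c$ equals $1+L_i$, where $L_i$ is the length of the maximal run of spots ending at $P(i)-1$ that are occupied when $c$ arrives.

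Then I would identify $L_i$ with $\sum_{k\in L(\y,\sigma_i)}y_k$. Scanning leftward from $P(i)-1$, the cars in the final layout are $\sigma_{i-1},\sigma_{i-2},\ldots,\sigma_1$ in spatial order, and $\sigma_j$ is already on the street when $\sigma_i$ arrives if and only if $\sigma_j<\sigma_i$. If $i=1$ or $\sigma_{i-1}>\sigma_i$, then the spot immediately left of $P(i)$ is empty, giving $L_i=0$ in agreement with $L(\y,\sigma_i)=\emptyset$. Otherwise the run extends through $\sigma_{i-1},\sigma_{i-2},\ldots$ as long as these indices are less than $\sigma_i$, halting at the first $\sigma_{t-1}>\sigma_i$ encountered or when $t=1$; this is precisely the set $L(\y,\sigma_i)$, whose cars contribute total length $\sum_{k\in L(\y,\sigma_i)}y_k$ to the occupied run.

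Finally, multiplying the independent counts $1+\sum_{k\in L(\y,\sigma_i)}y_k$ over $i=1,\ldots,n$ yields the claimed product. The main obstacle is the careful verification that the contiguous occupied block immediately to the left of $P(i)$ really matches the ``left ascending run'' described by $L(\y,\sigma_i)$; once this identification is in place, the rest of the argument is a direct application of the forward-seeking rule together with the decoupling of the per-car choices.
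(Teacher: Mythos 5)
Your proof is correct and follows essentially the same route as the paper: identify, for each car $\sigma_i$, the set of admissible preferences given the fixed parking order (the content of Lemma~\ref{lemma:pref for each car}), observe that this set is determined by $\sigma$ and $\y$ alone so that the per-car choices decouple, and multiply. Your write-up is in fact more explicit than the paper's terse lemma proof about why the spatial layout is rigid (the street is exactly filled, so car $\sigma_i$ must start at $P(i)=1+\sum_{j=1}^{i-1}y_{\sigma_j}$) and why the maximal occupied run ending at $P(i)-1$ is precisely the union of the blocks indexed by $L(\y,\sigma_i)$.
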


The specialization of $\y=(1,1,\ldots,1)\in\N^n$ in Theorem \ref{theorem:parking sequences} recovers \cite[Proposition 3.1]{knaple}:
    Let $\sigma = \sigma_1 \sigma_2 \ldots \sigma_n$
    be a permutation in $\mathfrak{S}_n$.
    Then $|\Out_{\PF_n}^{-1}(\sigma)|= \prod_{i=1}^n \ell(i; \sigma)$, 
    where $\ell(i;\sigma)$ is the length of the longest subsequence $\sigma_j \cdots \sigma_i$ of $\sigma$ such that $\sigma_t\leq \sigma_i$ for all $j\leq t \leq i$.

The following result gives an alternate new formula for the number of parking sequences for a fixed $\y$ as a sum over permutations.

\begin{corollary}\label{cor:count ps}
Fix $\y=(y_1,y_2,\ldots,y_n)\in\N^n$ and let
$L(\y,\sigma_i)$ be defined as in Theorem \ref{theorem:parking assortments}. Then 
     \begin{align}\label{eq:gen ps}
    |\PS_n(\y)|&=\sum_{\sigma\in\mathfrak{S}_n}|\Out_{\PS_n(\y)}^{-1}(\sigma)|=\sum_{\sigma\in\mathfrak{S}_n}\left(\prod_{i=1}^n\left(1+\sum_{k\in L(\y,\sigma_i)}y_k\right)\right).\end{align}
     
\end{corollary}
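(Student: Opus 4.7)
The plan is to exploit the fact that the outcome map $\Out_{\PS_n(\y)}$ is a well-defined function from $\PS_n(\y)$ to $\mathfrak{S}_n$, and hence its fibers $\Out_{\PS_n(\y)}^{-1}(\sigma)$ partition $\PS_n(\y)$ as $\sigma$ ranges over $\mathfrak{S}_n$. Taking cardinalities then immediately yields the first equality
\[|\PS_n(\y)|=\sum_{\sigma\in\mathfrak{S}_n}|\Out_{\PS_n(\y)}^{-1}(\sigma)|,\]
and the second equality is obtained by substituting the explicit product formula from Theorem \ref{theorem:parking sequences} for each $|\Out_{\PS_n(\y)}^{-1}(\sigma)|$.

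There is essentially no obstacle beyond Theorem \ref{theorem:parking sequences} itself; the corollary is a direct bookkeeping consequence of the fiber decomposition. One small subtlety worth mentioning is that a priori some $\sigma\in\mathfrak{S}_n$ might not arise as the outcome of any parking sequence for $\y$, in which case the corresponding fiber would be empty and the product in \eqref{eq:gen ps} would need to vanish for that $\sigma$. However, since every factor $1+\sum_{k\in L(\y,\sigma_i)}y_k$ in the product is at least $1$, Theorem \ref{theorem:parking sequences} implicitly guarantees that every permutation is realized as a parking outcome, so no permutation needs to be excluded from the sum and the formula is valid as stated.
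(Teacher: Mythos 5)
Your proposal is correct and matches the paper's treatment: the corollary is stated as an immediate consequence of Theorem \ref{theorem:parking sequences}, obtained by partitioning $\PS_n(\y)$ into the fibers of the outcome map and substituting the product formula for each fiber's cardinality. Your side remark about empty fibers is also sound, since each factor $1+\sum_{k\in L(\y,\sigma_i)}y_k$ is at least $1$, Theorem \ref{theorem:parking sequences} forces every fiber to be nonempty, so no permutation is excluded from the sum.
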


Next, we consider a generalization of parking sequences, known as parking assortments, for which we provide analogous results to those in Theorem \ref{theorem:parking sequences} and Corollary \ref{cor:count ps}. 

\emph{Parking assortments}, as introduced by Chen, Harris, Mart\'inez Mori, Pab\'on-Cancel, and Sargent~\cite{parkingassortments},  are defined as follows. 
As before, we fix $\y=(y_1,y_2,\ldots,y_n)\in\mathbb{N}^n$ to denote the car lengths and we suppose the cars have preferences $\x=(x_1,x_2,\ldots,x_n)\in[m]^n$, where $m=\sum_{i=1}^n y_i$. 
For $i\in[n]$,
car $i$ enters the one-way street from the left and drives to its preferred spot $x_i$. 
If spots $x_i,x_i+1,\ldots, x_i+y_i-1$ are unoccupied, then it parks. 
Otherwise, car $i$ proceeds down the one-way street, parking in the first contiguous unoccupied $y_i$ parking spots it encounters.
Throughout the paper, we refer to the spot(s) car $i$ parks in simply by the leftmost spot it occupies.
If no such parking spot(s) are found, then we say parking fails. 
If $\x$ is a preference list allowing all cars to park on the $m$ spots on the street, then we say that $\x$ is a \emph{parking assortment} for $\y$. 
We let $\PA_n(\y)$ denote the set of all parking assortments for $\y$. 
Observe that all parking sequences are assortments, i.e.~$\PS_n(\y)\subseteq \PA_n(\y)$. 
If $n=2$, then $\PS_2(\y)=\PA_2(\y)$ (Lemma \ref{lemma:equal}). 
This result relies on the fact that in either a parking sequence or parking assortment, there exists at least one car preferring the first spot. Otherwise, by pigeonhole principle, the cars are unable to park.
However, we have ample evidence that for $n\geq 3$ there are parking assortments that are not parking sequences. 
We illustrate such an example in Figure \ref{fig:pa not ps}.

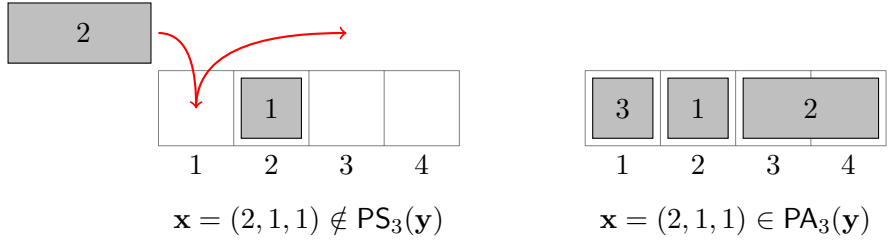
\begin{figure}[h]
    \centering
    \begin{tikzpicture}
    \node at (2,-1) {$\x=(2,1,1)\notin \PS_{3}(\y)$};
    \draw[step=1cm,gray,very thin] (0,0) grid (4,1);
     \draw[fill=gray!50] (1.1,0.1) rectangle (1.9,.9);
    \node at (1.5,.5) {$1$};
    \draw[fill=gray!50] (-2,1.1) rectangle (-.1,1.9);
    \node at (-1,1.5) {$2$};
    \draw[->,red, thick, out=0,in=90](0,1.5) to (.5,.5);
    \draw[->,red, thick, out=90,in=180](.5,.5) to (2.5,1.5);
    \node at (.5,-.25) {$1$};
     \node at (1.5,-.25) {$2$};
     \node at (2.5,-.25) {$3$};
     \node at (3.5,-.25) {$4$};
    \end{tikzpicture}
    \qquad\qquad
\begin{tikzpicture}
    \node at (2,-1) {$\x=(2,1,1)\in \PA_{3}(\y)$};
    \draw[step=1cm,gray,very thin] (0,0) grid (4,1);
     \draw[fill=gray!50] (1.1,0.1) rectangle (1.9,.9);
    \node at (1.5,.5) {$1$};
    \draw[fill=gray!50] (2.1,.1) rectangle (3.9,.9);
    \node at (3,.5) {$2$};
     \draw[fill=gray!50] (.1,0.1) rectangle (.9,.9);
     \node at (.5,.5) {$3$};
    \node at (.5,-.25) {$1$};
     \node at (1.5,-.25) {$2$};
     \node at (2.5,-.25) {$3$};
     \node at (3.5,-.25) {$4$};
    \end{tikzpicture}
    \caption{Let $\y=(1,2,1)$. In Figure \ref{fig:ps example} we showed $(2,1,1)\notin\PS_3(\y)$. However, under the parking assortment rule: car $1$ parks in spot 2. Car $2$ attempts to park in spot 1, unable to fit there, it continues down the street parking in spot 3 (occupying spots 3 and 4).
    Car $3$ finds spot 1 available, able to fit, it parks there.
    Thus, $\x\in \PA_3(\y)$.}
    \label{fig:pa not ps}
\end{figure}

We also consider the analogous study of the set of parking assortments resulting in a particular parking order. 
To make this precise we fix $\y$, and define the outcome map $\Out_{\PA_n(\y)}: \PA_n(\y) \rightarrow \mathfrak{S}_n$ by $\Out_{\PA_n(\y)}(\x)=\sigma=\sigma_1\sigma_2\cdots\sigma_n$, where $\sigma_j=i$ denotes that car $i$ is the $j$th car parked on the street. 
For a fixed $\sigma\in\mathfrak{S}_n$ we study the fibers of the outcome map:
\[
\Out^{-1}_{\PA_n(\y)}(\sigma)=
\{\x\in\PA_n(\y): \Out_{\PA_n(\y)}(\x)=\sigma\}.
\]
In Section \ref{sec:enum parking assortments}, we fix $\y\in\N^n$ and give the cardinality of $\Out^{-1}_{\PA_n(\y)}(\sigma)$ for any $\sigma\in\Sym_n$ (Theorem \ref{theorem:parking assortments}). 
Using this result, we establish a formula for the cardinality of $\PA_n(\y)$ as a sum over permutations (Corollary \ref{cor:count pa}). Note that Theorem \ref{theorem:parking assortments} and Corollary \ref{cor:count pa} are generalizations of Theorem \ref{theorem:parking sequences} and Corollary \ref{cor:count ps}, respectively.

We conclude with some applications of our enumerative results. In particular, for certain car lengths $\y\in\N^n$, we give the cardinality of the sets $\Out_{\PA_n}^{-1}(123\cdots n)$  as $n$ increases, where $123\cdots n$ is the identity permutation in $\mathfrak{S}_n$. Note that in this case $\Out_{\PS_n(\y)}^{-1}(123\cdots n)=\Out_{\PA_n(\y)}^{-1}(123\cdots n)$.

\begin{enumerate}
    \item If $\y=(1,2,3,\ldots,n)$, then the cardinalities of $\Out_{\PA_n}^{-1}(123\cdots n)$  as $n$ increases is \[1, 2, 8, 56, 616, 9856, 216832, 6288128,\ldots.\] This sequence agrees with OEIS \href{https://oeis.org/A128814}{A128814}: the partial products of the Lazy Caterer's Sequence. We recall that the Lazy Caterer's sequence (OEIS \href{https://oeis.org/A000124}{A00124}) gives the ``maximal number of pieces formed when slicing a pancake with $n$ cuts.''
    
    \item If $\y=(1,1,2,3,5,8,\ldots)$ consists of the first $n$ Fibonacci numbers,  then the cardinality of $\Out_{\PA_n}^{-1}(123\cdots n)$  as $n$ increases is \[1, 2, 6, 30, 240, 3120, 65520, 2227680,\ldots.\] This sequence agrees with OEIS \href{https://oeis.org/A003266}{A003266}: The product of the first $n+1$ nonzero Fibonacci numbers.
    \item If $\y=(1,3,9,28,90,\ldots)$, consists of the first $n+1$ terms of the sequence enumerating standard tableaux of shape $(n+1,n-1)$ (starting at index 2, OEIS \href{https://oeis.org/A071724}{A071724}), namely
    $y_i=\frac{3}{i+3}\binom{2(i+1)}{i}$ for $i\in [n+1]$,  then the cardinality of $\Out_{\PA_n}^{-1}(123\cdots n)$  as $n$ increases is \[1,2,10,140,5880,776160,\ldots.\] This sequence agrees with OEIS \href{https://oeis.org/A003046}{A003046}: The partial products of the first $n+1$ Catalan numbers.
\end{enumerate}

\section{Enumerating parking sequences}\label{sec:parking sequneces}
In this section, we prove Theorem \ref{theorem:parking sequences}. 
We begin by stating the following definition.
\begin{definition}
For each $i\in[n]$ and $\sigma=\sigma_1\sigma_2\cdots\sigma_n\in\mathfrak{S}_n$, we let $\Pref{\PS_n}(\sigma_i)$ be the set of possible preferences for car $\sigma_{i}$ so that it is the $i$th car to park on the street when using the parking sequence parking rule. We let $|\Pref{\PS_n}(\sigma_i)|$ denote the cardinality of the set.
\end{definition}

To begin, we present an example in which we compute the set of preferences for cars (of certain lengths) parking in a predetermined order.

\begin{example}\label{ex:1}
Let $\y=(1,6,5,5,3,2,2)$ and consider the parking order described by the permutation $\sigma=2457361$. 
We consider cars as they parked on the street from left to right in order to
determine the preferences for each car so that the parking process result in the cars parking in the order $\sigma$:
\begin{itemize}
    \item  Car $2$ is parked  first in the sequence of cars. Since there are no cars parked to the left of car $2$, there is only 1 spot car $2$ could have preferred, precisely where it is parked. Hence, $\Pref{\PS_7}(\sigma_1)=\{1\}$.

\item Car $4$ is parked second in the sequence of cars. 
Since car $2$ parked to the left of and earlier than car $4$, car $4$ could have preferred the spot it parked in or any of the spots occupied by car $2$.
Thus, $\Pref{\PS_7}(\sigma_2)=\{1,2,3,4,5,6,7\}$.

\item Car $5$ is parked third in the sequence of cars. 
Since car $2$ and car $4$ parked to the left of and earlier than car $5$, car $5$ could have preferred the spot it parked in or any of the spots occupied by car $2$ or by car $4$. Thus, 
$\Pref{\PS_7}(\sigma_3)=\{1,2,\ldots, 11,12\}$.

\item Car $7$ is parked fourth in the sequence of cars. 
Since cars $2$, $4$, and $5$ parked to the left of and earlier than car $7$, car $7$ could have preferred the spot it parked in or any of the spots occupied by cars $2$, $4$, or $5$.
Thus, 
$\Pref{\PS_7}(\sigma_4)=\{1,2,\ldots,14,15\}$.

\item Car $3$ is parked fifth in the sequence of cars. 
Since car $7$ parked to the left of car  $3$ but entered the street after car $3$, car $3$ could not have preferred any spots to the left of where car $3$ parked. 
Thus,
$\Pref{\PS_7}(\sigma_5)=\{17\}$.

\item Car $6$ is parked sixth in the sequence of cars. 
Since car $3$ parked to the left of and earlier than car $6$, car $6$ could have preferred the spot it parked in or any of the spots occupied by car $3$.
Moreover, as the next car to the left of car $3$ is car $7$, which arrived after car $6$, then car $6$ could not have preferred any of the spots car $7$ parks in or those before car $7$.
Thus, $\Pref{\PS_7}(\sigma_6)=\{17, 18, \ldots, 22\}$.

\item Car $1$ is parked seventh in the sequence of cars. 
Since car $6$ parked to the left of car $1$ but entered the street after car $1$, car $1$ could not have preferred any spots to the left of where car $1$ parked. 
Thus, $\Pref{\PS_7}(\sigma_7)=\{24\}$.
\end{itemize}

These computations show that
$\Out_{\PS_7(\y)}^{-1}(\sigma)=
\Pref{\PS_7}(\sigma_1)\times\Pref{\PS_7}(\sigma_2)\times\Pref{\PS_7}(\sigma_3)\times\Pref{\PS_7}
(\sigma_4)\times\Pref{\PS_7}(\sigma_5)\times\Pref{\PS_7}
(\sigma_6)\times\Pref{\PS_7}(\sigma_7)$
and hence
\begin{align}\label{eq:example}
|\Out_{\PS_7(\y)}^{-1}(\sigma)|=\prod_{i=1}^7|\Pref{\PS_7}(\sigma_i)|=1 \cdot 7\cdot 12\cdot 15\cdot 1\cdot 6\cdot 1=7560.
\end{align}
\end{example}

As Example~\ref{ex:1} illustrates, in computing $|\Pref{\PS_n}(\sigma_i)|$, it is important to know which cars parked to the left of car $\sigma_i$ and when they arrived in the queue, as this affects the possible preferences car $\sigma_i$ can have. 
This motivates the following.

\begin{definition}
Fix $\y\in\N^n$ and $\sigma=\sigma_1\sigma_2\cdots\sigma_n\in\mathfrak{S}_n$. 
For $i\in[n]$, 
let $\sigma_t\sigma_{t+1}\ldots \sigma_i$ be the longest subsequence of $\sigma$ such that $\sigma_k< \sigma_i$ for all $t\le k< i$.
\begin{enumerate}[leftmargin=.2in]
    \item If $i=1$, then define $L(\y,\sigma_i)=\emptyset$,
    \item if $\sigma_{i-1}>\sigma_i$, then define $L(\y,\sigma_i)=\emptyset$, and
    \item otherwise define $L(\y, \sigma_i)=\{\sigma_t,\sigma_{t+1},\ldots,\sigma_{i-1}\}$.
\end{enumerate}
\end{definition}

Note that albeit technical, the definition of $L(\y;\sigma_i)$ simply keeps track of the cars parked consecutively  left of $\sigma_i$ which arrived before $\sigma_i$.

\begin{lemma}\label{lemma:pref for each car}
Fix $\y\in\N^n$ and $\sigma=\sigma_1\sigma_2\cdots\sigma_n\in\mathfrak{S}_n$. If $i\in[n]$, then
\[|\Pref{\PS_n}(\sigma_i)|=1+\displaystyle\sum_{k\in L(\y,\sigma_i)}y_k.\]
\end{lemma}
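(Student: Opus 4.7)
The plan is to enumerate, for each $i\in[n]$, the preferences that make car $\sigma_i$ the $i$-th car parked on the street, and check that the count equals $1+\sum_{k\in L(\y,\sigma_i)} y_k$. Throughout, set $p_j := 1+\sum_{k=1}^{j-1} y_{\sigma_k}$; since a parking sequence fills the street contiguously (all $m$ spots are occupied), the $j$-th parked car from the left occupies exactly the block $[p_j,\,p_j+y_{\sigma_j}-1]$. For the base case $i=1$, spot $1$ must be filled by $\sigma_1$, forcing $x_{\sigma_1}=1$ and giving $|\Pref{\PS_n}(\sigma_1)|=1$, which matches the formula since $L(\y,\sigma_1)=\emptyset$.

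For $i\geq 2$, the key structural observation is that when car $\sigma_i$ enters the street, the cars already parked are exactly those $\sigma_j$ with $\sigma_j<\sigma_i$; in particular, among its left-neighbors $\sigma_1,\ldots,\sigma_{i-1}$, only those with smaller labels are already in place. By the definition of $L(\y,\sigma_i)$, the suffix $\sigma_t\sigma_{t+1}\cdots\sigma_{i-1}$ is the maximal consecutive block of pre-parked left-neighbors adjacent to $p_i$, with either $t=1$ or $\sigma_{t-1}>\sigma_i$ (so $\sigma_{t-1}$ creates the first empty gap to the left of $p_t$). I would then show that the admissible preferences form exactly the interval $\{p_t,p_t+1,\ldots,p_i\}$, via a trichotomy on $x:=x_{\sigma_i}$. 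If $x\in[p_t,p_i]$, then every spot in $[p_t,p_i-1]$ is occupied and $[p_i,p_i+y_{\sigma_i}-1]$ is empty when $\sigma_i$ arrives, so $\sigma_i$ slides to the first empty spot $p_i$ and parks there. If $x>p_i$, the first empty spot is at least $p_i+y_{\sigma_i}$, so $\sigma_i$ cannot park at $p_i$. If $x<p_t$ (which forces $t\geq 2$), then the first empty spot encountered by $\sigma_i$ lies at or before $p_t-1$, within the region left unfilled by $\sigma_{t-1}$, so $\sigma_i$ either parks strictly left of $p_i$ or collides there, contradicting the prescribed parking order.

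Counting then yields $|\Pref{\PS_n}(\sigma_i)|=p_i-p_t+1=1+\sum_{j=t}^{i-1} y_{\sigma_j}=1+\sum_{k\in L(\y,\sigma_i)} y_k$. The degenerate case $\sigma_{i-1}>\sigma_i$ corresponds to $t=i$, in which case the interval collapses to $\{p_i\}$ and the empty sum over $L(\y,\sigma_i)=\emptyset$ agrees with the formula. The main obstacle is the third case of the trichotomy: I must verify that no combination of lengths among $\sigma_1,\ldots,\sigma_{t-1}$ and no pattern of which of those are or are not yet parked could allow a preference $x<p_t$ to still route $\sigma_i$ past the gap at $\sigma_{t-1}$ all the way to $p_i$. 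The crucial point is that the parking sequence rule halts $\sigma_i$ at the \emph{first} empty spot encountered weakly right of $x$; so once $\sigma_i$ reaches any empty spot at or before $p_t-1$, it is committed to attempt parking there and cannot leapfrog further forward, regardless of what happens further to the right.
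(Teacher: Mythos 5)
Your argument is correct and takes essentially the same approach as the paper's (much terser) proof: the admissible preferences form exactly the interval from the left end of the contiguous block of earlier-arrived cars adjacent to $\sigma_i$'s parking position through that position itself, with the halt-at-first-empty-spot rule ruling out any preference left of that block. One small inaccuracy in your second case --- if $p_i < x \le p_i + y_{\sigma_i}-1$ then the first empty spot is $x$ itself, not ``at least $p_i+y_{\sigma_i}$'' --- but the conclusion that $\sigma_i$ would then park with leftmost spot strictly greater than $p_i$ is unaffected.
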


\begin{proof}
For any $j\in[n]$, recall that $\sigma_j=i$ denotes that car $i$ is the $j$th car parked on the street.
The only possible preferences for car $i$ is the initial spot it parks in or any of the spots contiguously occupied by cars parked to the left of car $i$ which arrived before it.
Such cars are those in the set $L(\y,\sigma_j)$.
Note that if car $i$ preferred any earlier spot, then there would be a collision or it would park elsewhere on the street, contradicting that car $i$ was the $j$th car on the street.
Therefore, this establishes that 
$|\Pref{\PS_n}(\sigma_j)|=1+\sum_{k\in L(\y,\sigma_j)}y_k$, as claimed.
\end{proof}

Using Lemma \ref{lemma:pref for each car}, we confirm the computation in \eqref{eq:example} next.
\begin{example}
As in Example \ref{ex:1}, let 
$\y=(1,6,5,5,3,2,2)\in\N^7$ and
and $\sigma=2457361\in\mathfrak{S}_7$. Then 
\begin{itemize}
    \item $L(\y, \sigma_1)=\emptyset$ and $|\Pref{\PS_7}(\sigma_1)|=
    1+0=1$,
    \item $L(\y, \sigma_2)=\{2\}$ and $|\Pref{\PS_7}(\sigma_2)|=
    1+y_2
    =1+6=7$,
    \item $L(\y, \sigma_3)=\{2,4\}$ and $|\Pref{\PS_7}(\sigma_3)|=
    1+y_2+y_4=
    1+6+5=12$,
    \item $L(\y, \sigma_4)=\{2,4,5\}$ and $|\Pref{\PS_7}(\sigma_4)|=
    1+y_2+y_4+y_5
    =1+6+5+3=15$,
    \item $L(\y, \sigma_5)=\emptyset$ and $|\Pref{\PS_7}(\sigma_5)|
    =1+0=1$,
    \item $L(\y, \sigma_6)=\{3\}$ and $|\Pref{\PS_7}(\sigma_6)|=
    1+y_3=
    1+5=6$, and
    \item $L(\y, \sigma_7)=\emptyset$ and $|\Pref{\PS_7}(\sigma_7)|=
    1+0=1$.
\end{itemize}

This confirms that $|\Out^{-1}_{\PS_7(\y)}(\sigma)|=7560$ as computed in \eqref{eq:example}.
\end{example}

For convenience we restate our main result.

\restatethm*

\begin{proof}
As the preferences for each car are independent, we know that 
$|\Out_{\PS_n(\y)}^{-1}(\sigma)|=\prod_{i=1}^n|\Pref{\PS_n}(\sigma_i)|$. Then by Lemma \ref{lemma:pref for each car}, we know that for each $i\in[n]$, $|\Pref{\PS_n}(\sigma_i)|=1+\sum_{k\in L(\y,\sigma_i)}y_k$, from which the result follows. 
\end{proof}

\section{Enumerating parking assortments}\label{sec:enum parking assortments}

In this section, we prove Theorem \ref{theorem:parking assortments}. 
We begin by establishing the following initial result.
\begin{lemma}\label{lemma:equal}
If $\y=(y_1,y_2)\in\N^2$, then $\PS_2(\y)=\PA_2(\y)$.
\end{lemma}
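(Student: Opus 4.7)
My plan is to show the nontrivial inclusion $\PA_2(\y)\subseteq \PS_2(\y)$, since the reverse containment $\PS_2(\y)\subseteq \PA_2(\y)$ is already noted in the paper. Given $\x=(x_1,x_2)\in\PA_2(\y)$, I will first invoke the pigeonhole observation flagged just before the lemma statement: because cars only move forward from their preference, if neither $x_1$ nor $x_2$ equals $1$ then spot $1$ is never occupied, contradicting the fact that all $m=y_1+y_2$ spots are filled. Hence $\min(x_1,x_2)=1$, and I split into two cases.

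In the first case, $x_1=1$, so under either parking rule car $1$ occupies spots $1,\dots,y_1$. For $\x$ to be an assortment, car $2$ must park, and a short check shows this forces $x_2\in\{1,2,\dots,y_1+1\}$ (if $x_2\geq y_1+2$, the remaining contiguous block $x_2,\dots,m$ has length at most $y_2-1$, so car $2$ cannot fit anywhere from $x_2$ onward). For any such $x_2$, the parking-sequence rule sends car $2$ to the first empty spot $\geq x_2$, which is $y_1+1$; since spots $y_1+1,\dots,m$ are all empty, car $2$ parks there with no collision, so $\x\in\PS_2(\y)$.

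In the second case, $x_2=1$ and $x_1\geq 2$, so car $1$ parks at $x_1,\dots,x_1+y_1-1$ under either rule, requiring $x_1\leq y_2+1$. For car $2$ (preferring spot $1$) to succeed under the assortment rule, either spots $1,\dots,y_2$ are all empty, which forces $x_1\geq y_2+1$ and hence $x_1=y_2+1$; or car $2$ must detour forward, but the gap left of car $1$ has length $x_1-1\leq y_2-1$ and the gap right of car $1$ has length $y_2+1-x_1\leq y_2-1$, both strictly less than $y_2$ whenever $2\leq x_1\leq y_2$. So $x_1=y_2+1$ is forced, and this $\x$ is immediately a parking sequence: car $1$ parks at spots $y_2+1,\dots,m$ without collision, then car $2$ parks at spots $1,\dots,y_2$.

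The main (and only mildly annoying) obstacle is the Case 2 gap analysis, since that is precisely where the assortment and sequence rules could a priori diverge: one must rule out every intermediate $x_1$ with $2\leq x_1\leq y_2$ by verifying that a forward-search detour by car $2$ is impossible. Once that is done the two parking rules coincide on $n=2$, establishing $\PA_2(\y)=\PS_2(\y)$.
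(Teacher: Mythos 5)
Your proof is correct and follows essentially the same route as the paper's: both reduce to showing $\PA_2(\y)\subseteq\PS_2(\y)$, use the pigeonhole observation that some car must prefer spot $1$, and split into the cases $x_1=1$ (forcing $x_2\le y_1+1$) and $x_2=1$ (forcing $x_1=y_2+1$). Your version merely spells out the gap-length computations that the paper's Case 2 asserts more briefly.
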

\begin{proof}
    We know that $\PS_n(\y)\subseteq\PA_n(\y)$ for all $n$.
    It suffices to show that if $\x\in\PA_2(\y)$, then $\x\in\PS_2(\y)$. We establish this next.
    
    Assume $\x=(x_1,x_2)\in[y_1+y_2]^2 \in \PA_2(\y)$. 
    Hence, at least one of the two cars must prefer parking spot~1.
    Then there are two possibilities:
    
    \begin{itemize}[leftmargin=0in]
        \item[] \textbf{Case 1:} $x_1=1$ and $x_2 \leq y_1+1$.
        In this case, car 1 parks in spot 1 and car 2 begins looking for an open spot early enough so that car 2 fits on the street. 
        Hence, $\x \in \PS_2(\y)$.\\
    \item[] \textbf{Case 2:} $x_1=y_2+1$ and $x_2=1$. 
    In this case, car 1 does not park in spot 1. 
    So, in order for $\x$ to be a parking sequence, car 1 must park in the $y_1$ rightmost spots on the street (i.e.~prefers spot $y_2+1$),
    otherwise car 1 would break up the street, leaving open spots to the left and to the right of car 1. 
    Also, since car 1 does not prefer spot 1, car 2 must prefer spot 1. Hence, $\x \in \PS_2(\y)$.\qedhere
    \end{itemize}
\end{proof}

The following definitions set some needed notation for our enumerative results.

\begin{definition}
For each $i\in[n]$ and $\sigma=\sigma_1\sigma_2\cdots\sigma_n\in\mathfrak{S}_n$, we let $\Pref{\PA_n}(\sigma_i)$ be the set of possible preferences for car $\sigma_{i}$ so that it is the $i$th car to park on the street when using the parking assortment parking rule. We let $|\Pref{\PA_n}(\sigma_i)|$ denote the cardinality of the set.
\end{definition}

\begin{definition}\label{def:complicated}
Let $\sigma=\sigma_1\sigma_2\cdots\sigma_n\in\mathfrak{S}_n$. 
Fix $i\in[n]$ and partition the subword $T(\sigma_i)\coloneqq \sigma_1\sigma_2\cdots\sigma_{i-2}\sigma_{i-1}$ as follows:
\begin{enumerate}[leftmargin=.2in]
    \item If $i=1$, then $T(\sigma_1)=\emptyset$.

\item If $\sigma_{i-1}>\sigma_i$, then 

\begin{align}
T(\sigma_i)=\sigma_1\sigma_2\cdots\sigma_{i-2}\sigma_{i-1}=
\begin{cases}
  \beta_\ell\alpha_\ell\cdots\beta_2\alpha_2 \beta_1 \alpha_1&\mbox{if $\sigma_1<\sigma_i$}\\
  \alpha_{\ell+1}\beta_\ell\alpha_\ell\cdots\beta_2\alpha_2 \beta_1 \alpha_1&\mbox{if $\sigma_1>\sigma_i$}
\end{cases}\label{eq:left is bigger}
\end{align}
where 
$\alpha_1$ is the longest contiguous subword consisting of $\sigma_j>\sigma_i$ and  $\beta_1$ is the longest contiguous subword consisting of $\sigma_j<\sigma_i$.  
We iterate in this way until we consider $\sigma_1$ arriving at one of the two cases in \eqref{eq:left is bigger}.

    \item If $\sigma_{i-1}<\sigma_i$, then 
\begin{align}
T(\sigma_i)=\sigma_1\sigma_2\cdots\sigma_{i-2}\sigma_{i-1}=\begin{cases}
  \alpha_\ell\beta_\ell\cdots\alpha_2 \beta_2 \alpha_1\beta_1&\mbox{if $\sigma_1>\sigma_i$}\\
  \beta_{\ell+1}\alpha_\ell\beta_\ell\cdots\alpha_2 \beta_2\alpha_1\beta_1&\mbox{if $\sigma_1<\sigma_i$}
\end{cases}\label{eq:left is smaller}
\end{align}
where 
$\beta_1$ is the longest contiguous subword consisting of $\sigma_j<\sigma_i$ and $\alpha_1$ is the longest contiguous subword consisting of $\sigma_j>\sigma_i$. 
We iterate in this way until we consider $\sigma_1$ arriving at one of the two cases in \eqref{eq:left is smaller}. 
\end{enumerate}
\end{definition}
In Definition \ref{def:complicated}, we use the letters $\alpha$ (or $\beta$) to identify cars parked to the left of a particular car which arrived ``after'' (or ``before'') it. 
We illustrate Definition \ref{def:complicated} next.
\begin{example}\label{ex:partitioning sigma}
    If $\sigma=4123$, then
    $T(\sigma_1)=\emptyset$, $ T(\sigma_ 2)=\underbrace{4}_{\alpha_1}$, $T(\sigma_ 3)=\underbrace{4}_{\alpha_1}\underbrace{1}_{\beta_1}$, $ T(\sigma_ 4)=\underbrace{4}_{\alpha_1}\underbrace{12}_{\beta_1}$.
\end{example}

We soon show that Definition \ref{def:complicated} encapsulates all of the cases affecting the preferences for every car.
Moreover, in what follows, we abuse notation by thinking of $\alpha$'s and $\beta$'s both as subwords and as sets consisting of the values making up each respective subword.

\begin{theorem}\label{thm:assortments}
Let $\sigma=\sigma_1\sigma_2\cdots\sigma_n\in\mathfrak{S}_n$ and $\y=(y_1,y_2,\ldots,y_n)\in\mathbb{N}^n$.
Fix $i\in[n]$ and partition $T(\sigma_i)$ as in Definition \ref{def:complicated}. 
Then $\Pref{\PA_n}(\sigma_i)$ has the following cardinalities:
\begin{enumerate}[leftmargin=.2in]
    \item if $i=1$ or $\sigma_{i-1}>\sigma_i$, then $|\Pref{\PA_n}(\sigma_i)|=1$;
    \item if $T(\sigma_i)=\beta_1$, then $|\Pref{\PA_n}(\sigma_i)|=1+ \displaystyle\sum_{\sigma_k\in\beta_1}y_{\sigma_k}$;
    \item otherwise 
    \[
      |\Pref{\PA_n}(\sigma_i)|=\begin{cases}
      1+\displaystyle\sum_{k=1}^{i-1}y_{\sigma_k}&\qquad\mbox{if $m(i)$ does not exist}\\
      \displaystyle\sum_{\sigma_k\in \beta_{m(i)}\alpha_{m(i)-1}\beta_{m(i)-1}\cdots \alpha_1\beta_1\sigma_i}\hspace{-.5in}y_{\sigma_k}&\qquad\mbox{if $m(i)$ exists}
    \end{cases}
    \]
    where \[m(i)=\min\left\{1\leq j\leq \ell: \sum_{\sigma_k\in \alpha_j}y_{\sigma_k}\geq y_{\sigma_{i}}\right\}.\]
    \end{enumerate}
\end{theorem}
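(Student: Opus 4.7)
The plan is to fix $\y$, $\sigma$, and $i\in[n]$ and determine exactly which preferences $p$ for car $\sigma_i$ produce parking at the forced spot $f\coloneqq 1+\sum_{k=1}^{i-1}y_{\sigma_k}$ under the parking assortment rule. Two preliminary observations compress the analysis. First, at the instant $\sigma_i$ enters, the parked cars are exactly $\{\sigma_k:\sigma_k<\sigma_i\}$, so among the spots $1,2,\ldots,f-1$ the partition of $T(\sigma_i)$ from Definition~\ref{def:complicated} identifies the occupied contiguous strips (the $\beta$-blocks, already-parked cars $\sigma_k$ with $\sigma_k<\sigma_i$) and the empty contiguous strips (the $\alpha$-blocks, reserved for cars $\sigma_k$ with $\sigma_k>\sigma_i$, which have not yet arrived). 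The block $[f,f+y_{\sigma_i}-1]$ is likewise empty. Second, no preference $p>f$ can park $\sigma_i$ at $f$, since a car only moves rightward, so I would restrict attention to $p\in[1,f]$ and trace the slide rule for each such $p$.

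For the first case ($i=1$ or $\sigma_{i-1}>\sigma_i$), the strip $\alpha_1$ abuts $f$ on the left, and because $\alpha_1$ and $[f,f+y_{\sigma_i}-1]$ are both empty they fuse into one long empty region. Any $p$ in this region with $p\ne f$ lets $\sigma_i$ fit at $p$ (so $\sigma_i$ parks strictly left of $f$), and any $p$ further left slides $\sigma_i$ rightward into this region and parks it there. Only $p=f$ works, giving $|\Pref{\PA_n}(\sigma_i)|=1$.

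For $\sigma_{i-1}<\sigma_i$, the strip $\beta_1$ abuts $f$. If $T(\sigma_i)=\beta_1$, then $[1,f-1]$ is fully occupied and every $p\in[1,f]$ slides to the first empty block of size $y_{\sigma_i}$, namely $[f,f+y_{\sigma_i}-1]$, yielding $1+\sum_{\sigma_k\in\beta_1}y_{\sigma_k}$. Otherwise I would scan the $\alpha$-strips right-to-left: the strip $\alpha_j$ can hold $\sigma_i$ precisely when $A_j\coloneqq\sum_{\sigma_k\in\alpha_j}y_{\sigma_k}\ge y_{\sigma_i}$, and $m(i)$ is the smallest such index. A preference in any $\beta_j$ with $j\le m(i)$ or in any $\alpha_j$ with $j<m(i)$ forces $\sigma_i$ to slide across occupied $\beta$-strips and undersized $\alpha$-strips, landing at $f$. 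Within $\alpha_{m(i)}$, exactly the last $y_{\sigma_i}-1$ positions cause the block to overflow into $\beta_{m(i)}$ and slide on to $f$, while the remaining positions park $\sigma_i$ inside $\alpha_{m(i)}$. Any preference strictly to the left of $\alpha_{m(i)}$ routes the car into some fitting $\alpha_{j'}$ with $j'\ge m(i)$ and never reaches $f$. Summing the valid contributions---namely $1$ from $p=f$, the total lengths of $\beta_1,\ldots,\beta_{m(i)}$ and $\alpha_1,\ldots,\alpha_{m(i)-1}$, and the $y_{\sigma_i}-1$ boundary positions in $\alpha_{m(i)}$---produces $y_{\sigma_i}+\sum_{\sigma_k\in\beta_{m(i)}\alpha_{m(i)-1}\cdots\alpha_1\beta_1}y_{\sigma_k}$, as claimed. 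When $m(i)$ does not exist, every $\alpha_j$ is undersized, so every $p\in[1,f]$ reaches $f$, collapsing the count to $1+\sum_{k=1}^{i-1}y_{\sigma_k}$.

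The main obstacle is bookkeeping: Definition~\ref{def:complicated} has several boundary variants (depending on whether $\sigma_1$ falls in an $\alpha$- or $\beta$-block, and on whether $\sigma_{i-1}<\sigma_i$ or $\sigma_{i-1}>\sigma_i$), and the slide analysis must be verified uniformly across them. The two crucial subtleties are the fusion of $\alpha_1$ with $[f,f+y_{\sigma_i}-1]$ in the first case (which collapses that count to~$1$) and the off-by-one behavior inside $\alpha_{m(i)}$ (which contributes $y_{\sigma_i}-1$ rather than $A_{m(i)}$). A clean presentation would isolate the general slide rule as a preliminary lemma---from any preference $p$, the assortment procedure parks the car at the leftmost $s\ge p$ with $[s,s+y_{\sigma_i}-1]$ entirely empty---and then apply it to each substructure of $T(\sigma_i)$ to extract the stated formulas.
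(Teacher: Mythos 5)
Your proposal is correct and follows essentially the same route as the paper: identify the $\beta$-blocks as occupied and the $\alpha$-blocks as empty gaps at the moment car $\sigma_i$ arrives, and determine case by case which preferences slide the car to its forced spot $f$, with $m(i)$ locating the nearest gap large enough to capture the car. Your treatment is if anything slightly more explicit than the paper's (isolating the slide rule and verifying the fusion of $\alpha_1$ with $[f,f+y_{\sigma_i}-1]$ in Case 1 and the $y_{\sigma_i}-1$ boundary positions in $\alpha_{m(i)}$), but the decomposition and counting are the same.
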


\begin{proof}
We proceed by proving each case independently.
\begin{itemize}[leftmargin=0in]
    \item[]  \textbf{Case 1:} If $i=1$, then $\sigma_i=\sigma_1$ is the first car parked on the street, which implies that it must have preferred the first parking spot on the street. Hence $|\Pref{\PA_n}(\sigma_1)|=1$, as claimed. 
If $\sigma_{i-1}>\sigma_i$, this means that the car parked immediately to the left of $\sigma_i$ arrived after $\sigma_i$. 
Hence car $\sigma_i$ can only prefer the spot it parked in, as otherwise it would have parked elsewhere. 
This implies $|\Pref{\PA_n}(\sigma_i)|=1$, as claimed.\\

\item[] \textbf{Case 2:} If $T(\sigma_i)=\beta_1$, then $\sigma_j<\sigma_i$ for all $j\in[i-1]$. 
Thus all of the cars parked left of $\sigma_i$ arrived and parked before $\sigma_i$. 
Hence $\sigma_i$ could prefer all of the spots cars $\sigma_1,\sigma_2,\ldots,\sigma_{i-1}$ occupy, as well as the spot in which $\sigma_i$ ultimately parks. 
This implies $|\Pref{\PA_n}(\sigma_i)|=1+\sum_{\sigma_j\in\beta_1}y_{\sigma_j}$.\\

\item[]\textbf{Case 3:} Note that $\sigma_{i-1}<\sigma_{i}$ (as otherwise this would be Case 1).
Furthermore, we can assume that $\alpha_1$ exists (as otherwise this would be Case 2). 
Hence, by Definition \ref{def:complicated}, we have
\[T(\sigma_i)=\sigma_1\sigma_2\cdots\sigma_{i-2}\sigma_{i-1}=\begin{cases}
  \alpha_\ell\beta_\ell\cdots\alpha_2 \beta_2 \alpha_1\beta_1&\mbox{if $\sigma_1>\sigma_i$}\\
  \beta_{\ell+1}\alpha_\ell\beta_\ell\cdots\alpha_2 \beta_2\alpha_1\beta_1&\mbox{if $\sigma_1<\sigma_i$}
\end{cases}\]
where 
$\beta_1$ is the longest contiguous subword consisting of $\sigma_j<\sigma_i$ and $\alpha_1$ is the longest contiguous subword consisting of $\sigma_j>\sigma_i$. 
In either case, we note that by definition, each $\alpha_j$ denotes a set of cars parking contiguously on the street, arriving in the queue after car $\sigma_i$ and parking to the left of car $\sigma_i$. 
The cars in the subwords $\alpha_j$ (for $1\leq j\leq \ell$) create gaps in the street which $\sigma_i$ could potentially park in if they happen to be large enough.

That is, for any $j\in [\ell]$, if $\sum_{\sigma_k\in\alpha_j}y_{\sigma_k}\geq y_{\sigma_i}$, then $\sigma_i$ 
\begin{itemize}
    \item cannot prefer all of the spots occupied by the cars in $\alpha_j$ and
    \item cannot prefer any spots to the left of the spots occupied by the cars in $\alpha_j$,
\end{itemize}
since then $\sigma_i$
would park either before or within the spots occupied by the cars in $\alpha_j$. 
Both cases contradict the fact that $\sigma_i$
is the $i$th car parked on the street.

In fact, the only parking spots car $\sigma_i$ could prefer are 
\begin{itemize}
\item the spots occupied by the cars in $\beta_{m(i)}\alpha_{m(i)-1}\beta_{m(i)-1}\cdots\alpha_1\beta_1$,
    \item the right-most $y_{\sigma_i}-1$ spots occupied by the cars in $\alpha_{m(i)}$, or
    \item the spot $\sigma_i$ parks in.   
\end{itemize}
Note that this exhausts all of the possible preferences for $\sigma_i$, as $\alpha_{m(i)}$ (by definition) is the closest gap in which $\sigma_i$ could park.
Thus, the number of spots that car $\sigma_i$ can prefer is 
\begin{align*}
|\Pref{\PA_n}(\sigma_i)|&=1+(y_{\sigma_i}-1)+\sum_{\sigma_k\in\beta_{m(i)}\alpha_{m(i)-1}\beta_{m(i)-1}\cdots\alpha_1\beta_1}y_{\sigma_k}
=\sum_{\sigma_k\in\beta_{m(i)}\alpha_{m(i)-1}\beta_{m(i)-1}\cdots\alpha_1\beta_1\sigma_i}y_{\sigma_k}
\end{align*}
as claimed.\qedhere
\end{itemize}
\end{proof}

We can now formally state and prove the analogous result to Theorem \ref{theorem:parking sequences} for parking assortments.
\begin{theorem}\label{theorem:parking assortments}
    Fix $\y=(y_1,y_2,\ldots,y_n)\in\N^n$ and let $\sigma=\sigma_1\sigma_2\cdots\sigma_n\in\mathfrak{S}_n$. Then 
    \[|\Out_{\PA_n(\y)}^{-1}(\sigma)|=\prod_{i=1}^n|\Pref{\PA_n}(\sigma_i)|,\]
    where 
    \begin{equation}
    |\Pref{\PA_n}(\sigma_i)|=
    \begin{cases}
      1 & \qquad\mbox{if } i=1 \mbox{ or } \sigma_{i-1}>\sigma_i  \\
     1+ \displaystyle\sum_{\sigma_k\in\beta_1}y_{\sigma_k} &\qquad\mbox{if } T(\sigma_i)=\beta_1\\
1+\displaystyle\sum_{k=1}^{i-1}y_{\sigma_k}&\qquad\mbox{if $m(i)$ does not exist}\\
      \displaystyle\sum_{\sigma_k\in \beta_{m(i)}\alpha_{m(i)-1}\beta_{m(i)-1}\cdots \alpha_1\beta_1\sigma_i}\hspace{-.5in}y_{\sigma_k}&\qquad\mbox{if $m(i)$ exists}      
    \end{cases}
\end{equation}
with \[m(i)=\min\left\{1\leq j\leq \ell: \displaystyle\sum_{\sigma_k\in \alpha_j}y_{\sigma_k}\geq y_{\sigma_{i}}\right\}.\]
 \end{theorem}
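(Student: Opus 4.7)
The plan is to mirror the two-step strategy used in the proof of Theorem \ref{theorem:parking sequences}. First, I will argue that the fiber $\Out_{\PA_n(\y)}^{-1}(\sigma)$ decomposes as a Cartesian product of the sets $\Pref{\PA_n}(\sigma_i)$, so that its cardinality factors as $\prod_{i=1}^{n}|\Pref{\PA_n}(\sigma_i)|$. Second, I will invoke Theorem \ref{thm:assortments} to substitute the explicit cardinality of each $\Pref{\PA_n}(\sigma_i)$, which yields the four-way case-by-case formula in the statement verbatim.

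For the first step, the key observation is that once $\sigma$ and $\y$ are fixed, the leftmost spot occupied by the $j$th parked car is forced to be $1+\sum_{k<j}y_{\sigma_k}$, because the $n$ cars fill all $m=\sum_{i}y_{i}$ spots contiguously from left to right in the order dictated by $\sigma$. Consequently, whether a particular value of $x_{\sigma_i}$ causes car $\sigma_i$ to land in the $i$th parked position depends only on where cars $\sigma_1,\ldots,\sigma_{i-1}$ already sit on the street, and those positions are pinned down by $\sigma$ and $\y$ alone -- not by the specific preferences the earlier arrivals used to reach them. Moreover, because the parking assortment rule is causal (car $j$ finishes parking before any car with a larger index enters the street), later preferences cannot retroactively affect whether car $\sigma_i$ landed in the correct spot. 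Combining these two observations gives that $\x\in\Out_{\PA_n(\y)}^{-1}(\sigma)$ if and only if $x_{\sigma_i}\in\Pref{\PA_n}(\sigma_i)$ for every $i\in[n]$, so the fiber is exactly the Cartesian product of the sets $\Pref{\PA_n}(\sigma_i)$, and the product formula follows upon taking cardinalities.

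The second step is then immediate from Theorem \ref{thm:assortments}, which has already established the four-case formula for $|\Pref{\PA_n}(\sigma_i)|$ exactly as it appears in the statement. The main obstacle is therefore entirely subsumed by Theorem \ref{thm:assortments}: determining precisely which spots car $\sigma_i$ can prefer while still parking in position $i$, especially in the presence of the $\alpha_j$-blocks consisting of cars that end up to the left of $\sigma_i$ but arrive in the queue after it. Such blocks may create gaps large enough to capture $\sigma_i$ prematurely, and the index $m(i)$ pinpoints the first such block that is wide enough to accommodate car $\sigma_i$, which is exactly what governs the branching of the formula. With that combinatorial analysis already in hand, the proof of Theorem \ref{theorem:parking assortments} reduces to recording the Cartesian-product decomposition established above and plugging in the formula from Theorem \ref{thm:assortments} coordinate by coordinate.
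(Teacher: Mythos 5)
Your proposal is correct and follows essentially the same route as the paper: the paper's proof of Theorem \ref{theorem:parking assortments} likewise consists of citing Theorem \ref{thm:assortments} for the per-car cardinalities and appealing to the independence of the cars' preferences to obtain the product. Your additional justification of the Cartesian-product decomposition (that the parked positions are pinned down by $\sigma$ and $\y$ alone, so each coordinate can be chosen independently) is a correct elaboration of what the paper states in one line.
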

\begin{proof}
    This follows directly from Theorem \ref{thm:assortments} and the fact that cars' parking preferences are independent.
\end{proof}

Theorem \ref{theorem:parking assortments} immediately implies the following result.
\begin{corollary}\label{cor:count pa}
Fix $\y=(y_1,y_2,\ldots,y_n)\in\N^n$ and for any $\sigma\in\Sym_n$, let $|\Out_{\PA_n(\y)}^{-1}(\sigma)|$ be as given by Theorem~\ref{theorem:parking assortments}. Then
    \[|\PA_n(\y)|=\sum_{\sigma\in\mathfrak{S}_n}|\Out_{\PA_n(\y)}^{-1}(\sigma)|.\]
\end{corollary}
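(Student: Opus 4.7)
The plan is to deduce this corollary from Theorem~\ref{theorem:parking assortments} by a completely standard fiber-partition argument; essentially nothing new is needed beyond recognizing that the outcome map is a well-defined function whose fibers partition its domain.

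First I would observe that $\Out_{\PA_n(\y)} \colon \PA_n(\y) \to \Sym_n$ is a genuine (single-valued) function: by the definition of the parking assortment rule, each preference list $\x \in \PA_n(\y)$ yields a deterministic outcome in which every car parks in a uniquely determined spot, and reading off the indices of the cars from left to right along the street produces exactly one permutation $\sigma = \sigma_1\sigma_2\cdots\sigma_n \in \Sym_n$. This is the content of the definition of $\Out_{\PA_n(\y)}$ given in the introduction, so no additional verification is needed.

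Next, I would note that because $\Out_{\PA_n(\y)}$ is a function from $\PA_n(\y)$ to $\Sym_n$, its fibers
\[
\Out^{-1}_{\PA_n(\y)}(\sigma) = \{\x \in \PA_n(\y) : \Out_{\PA_n(\y)}(\x) = \sigma\}, \qquad \sigma \in \Sym_n,
\]
are pairwise disjoint, and their union is all of $\PA_n(\y)$. Hence
\[
|\PA_n(\y)| = \left|\bigsqcup_{\sigma \in \Sym_n} \Out^{-1}_{\PA_n(\y)}(\sigma)\right| = \sum_{\sigma \in \Sym_n} |\Out^{-1}_{\PA_n(\y)}(\sigma)|,
\]
which is the claimed identity. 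Finally, substituting the explicit formula for $|\Out^{-1}_{\PA_n(\y)}(\sigma)|$ from Theorem~\ref{theorem:parking assortments} into each summand gives an expression for $|\PA_n(\y)|$ purely in terms of car-length sums indexed by the subword decomposition of each $\sigma$.

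There is essentially no obstacle here; the only point worth double-checking is the well-definedness of the outcome map, which was already established when $\Out_{\PA_n(\y)}$ was introduced. The proof is therefore a one-line consequence of Theorem~\ref{theorem:parking assortments} together with the fact that the fibers of a function partition its domain.
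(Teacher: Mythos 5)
Your proposal is correct and matches the paper, which states this corollary as an immediate consequence of Theorem~\ref{theorem:parking assortments}; your fiber-partition argument simply spells out the routine step the paper leaves implicit.
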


\begin{example} We conclude by applying the results in this section when $\sigma=4123$ and $y=(1,2,1,2)$.
    \begin{itemize}[leftmargin=0in]
        \item If $i=1,2$, by Theorem \ref{thm:assortments} case 1, we have $|\Pref{\PA_4}(\sigma_1)|=|\Pref{\PA_4}(\sigma_2)|=1$.
        \item If $i=3$, then $\sigma_3=2$.
        By Example \ref{ex:partitioning sigma}, we find that $m(3)=1$ 
        since $\alpha_1$ has length $2$ and car $2$ could potentially park there. As this is the only index such that $y_4=2\geq y_3=2$, we can use Theorem \ref{thm:assortments} case 3 to find that
        $|\Pref{\PA_4}(\sigma_3)|=1+(y_{2}-1)+\sum_{\sigma_k\in\beta_1}y_{\sigma_k}=1+(2-1)+1=1+1+1=3$.
        \item If $i=4$, then $\sigma_4=3$. By Example \ref{ex:partitioning sigma}, we can use Theorem \ref{thm:assortments} case 2 to find that $|\Pref{\PA_4}(\sigma_4)|=1+\sum_{\sigma_k\in \beta_1}y_{\sigma_k}=1+y_1+y_2=1+1+2=4$.
    \end{itemize}
    Theorem \ref{theorem:parking assortments} yields
    $|\Out^{-1}_{\PA_4(\y)}(4123)|=1\cdot 1\cdot 3\cdot 4= 12.$
    In Table \ref{tab:final}, we provide the cardinality of the sets $\Out^{-1}_{\PA_4((1,2,1,2))}(\sigma)$ for all  $\sigma\in\Sym_{4}$. From that data, we then use Corollary \ref{cor:count pa} to find that 
    $|\PA_4(\y)|=192$.
    \begin{table}[h!]
        \centering
        \begin{tabular}{|c|c|c|c|c|c|c|c|}\hline
             $\sigma$& $|\Out^{-1}_{\PA_4(\y)}(\sigma)|$&$\sigma$& $|\Out^{-1}_{\PA_4(\y)}(\sigma)|$&$\sigma$& $|\Out^{-1}_{\PA_4(\y)}(\sigma)|$&$\sigma$& $|\Out^{-1}_{\PA_4(\y)}(\sigma)|$ \\\hline
             $1234$ & 40 &
$2134$ & 20 &
$3124$ & 15 &
$4123$ & 12\\\hline
$1243$ & 8 &
$2143$ & 4 &
$3142$ & 3 &
$4132$ & 2\\\hline
$1324$ & 10 &
$2314$ & 15 &
$3214$ & 5 &
$4213$ & 4\\\hline
$1342$ & 6 &
$2341$ & 12 &
$3241$ & 4 &
$4231$ & 3\\\hline
$1423$ & 6 &
$2413$ & 6 &
$3412$ & 6 &
$4312$ & 3\\\hline
$1432$ & 2 &
$2431$ & 3 &
$3421$ & 2 &
$4321$ & 1\\\hline
        \end{tabular}
        \caption{Cardinalities of the sets $\Out_{\PA_4((1,2,1,2))}^{-1}(\sigma)$ for each $\sigma\in\Sym_4$.}
        \label{tab:final}
    \end{table}
\end{example}

\begin{remark}
Although our results are enumerative, they do in fact describe the set of  preferences of the cars. Moreover, we note that the preferences for car $i$ are  always bounded above by the sum of the lengths of the cars parked to the left of car $i$ plus one for the spot in which car $i$ parks.
\end{remark}

\end{document}